\begin{document}
\title{Closures of $O_n$-orbits in the flag variety for $GL_n$}
\author{William M. McGovern}
\subjclass{22E47,57S25}
\keywords{flag variety, pattern avoidance, rational smoothness}
\begin{abstract}
We give a pattern avoidance criterion for the conjugates of the bottom vertex of the Bruhat graph attached to an $O_n$ orbit $\mathcal O$ in the flag variety of $GL_n$ to have degree equal to the rank of this graph as a poset.  This condition is known to be necessary for rational smoothness of $\overline{\mathcal O}$; we conjecture that it is also sufficient, proving that this holds for $n$ even.
\end{abstract}
\maketitle

\section{Introduction}
Let $G$ be a complex reductive group with Borel subgroup $B$ and let $K=G^\theta$ be the fixed point subgroup of an involution of $G$.  In this paper we develop the program begun in \cite{M09} and continued in \cite{MT09,M10}, seeking to characterize the $K$-orbits in $G/B$ with rationally smooth closure via a combinatorial criterion.  Here we treat the case $G = GL_n\mathbb C, K = O_n\mathbb C$ and give a necessary pattern avoidance criterion, conjecturing that it is also sufficent, and proving this for $n$ even.  We focus on the order ideal of orbits with closures contained in a fixed one $\bar{\mathcal O}$; this is an interval inside the poset of all $K$-orbits in $G/B$, ordered by containment of closures.  Richardson and Springer have defined an action of the braid monoid corresponding to the Weyl group $W$ of $G$ on this poset \cite{RS90}, which we use to make it and its order ideals into graphs.  Our starting point is a necessary graph-theoretic criterion for rational smoothness inspired by the work of Carrell and Peterson on Schubert varieties \cite{C94}.  This criterion has been generalized by Hultman to a necessary and sufficient criterion for rational smoothness for $G=GL_{2n}\mathbb C, K=$Sp$_{2n}\mathbb C$ \cite{H12}; it is {\sl not} in general sufficient in our situation.

I would like to warmly thank Axel Hultman for suggesting the approach used in this paper.

\section{Preliminaries}
Set $G = GL_n\mathbb C, K = O_n\mathbb C$.  Let $B$ be the subgroup of upper triangular matrices in $G$.  The quotient $G/B$ may be identified with the variety of complete flags $V_0\subset V_1\subset\cdots\subset V_{n}$ in $\mathbb C^n$.  The group $K$ acts on this variety with finitely many orbits; these are parametrized by the set $I_n$ of involutions in the symmetric group $S_n$\cite{MO88,RS90}.  In more detail, let $(\cdot,\cdot)$ be the standard symmetric bilinear form on $\mathbb C^n$, with isometry group $K$.  Then a flag $V_0\subset\cdots\subset V_n$ lies in the orbit $\mathcal O_\pi$ corresponding to the involution $\pi$ if and only if the rank $r_{ij}$ of $(\cdot,\cdot)$ on $V_i\times V_j$ equals the cardinality
$\pi_{ij}:=\#\{k: 1\le k\le i, \pi(k)\le j\}$ for all $1\le i,j\le n$.  

We use the same definition of pattern avoidance for permutations as in \cite{M10}, decreeing that $\pi=\pi_1\ldots\pi_n$ (in one-line notation) includes the pattern $\mu=\mu_1\ldots\mu_r$ if there are indices $i_1< i_2 <\cdots< i_r$ permuted by $\pi$ such that $\pi_{i_j}>\pi_{i_k}$ if and only if $\mu_j>\mu_k$.  We say that $\pi$ avoids $\mu$ if it does not include $\mu$.  (The more classical definition of pattern inclusion of Billey and others \cite{B00} would not require that
$\pi$ permute the indices $i_j$.  Thus by our definition the involution 65872143 does not include the pattern 2143, even though the indices $2,1,4,3$ occur in that order in the involution, since they are not permuted by it.   We will say more about the classical definition later.)

There are well-known poset- and graph-theoretic criteria for rational smoothness of complex Schubert varieties due to Carrell and Peterson.  The poset criterion does not extend to our setting but the graph one does.  To state it we first recall that the partial order on $I_n$ corresponding to inclusion of orbit closures is the reverse Bruhat order \cite{RS90}.   Then $I_n$ is graded via the rank function
$$
r(\pi) ={\lfloor n^2/4\rfloor} - \sum_{i<\pi(i)} (\pi(i) - i - \#\{k\in\mathbb N: i<k<\pi(i), \pi(k)<i\})
$$
\noindent where $\lfloor n^2/4\rfloor$ denotes the greatest integer to $n^2/4$ and $r(\pi)$ equals the difference in dimension between $\mathcal O_\pi$ and $\mathcal O_c$, the unique closed orbit, corresponding to the involution $w_0=n\ldots1$ \cite{RS90}.  Let $I_\pi$ be the interval consisting of all $\pi'\le\pi$ in the reverse Bruhat order.  We make $I_\pi$ into a graph by decreeing that the vertices $\mu$ and $\nu$ in it are adjacent if and only if either $\nu = t\mu t\ne\mu$ for some transposition $t$ in $S_n$, or  $\nu=t\mu$ for some transposition $t$ in $S_n$ with $t\mu t = \mu$ and $m$ is even; write $\nu = t\cdot\mu$ if either of these conditions holds.  We similarly make the full poset $I_n$ into a graph as well.  Then a necessary condition for $\bar{\mathcal O}_\pi$ to be rationally smooth is that the degree of $w_0$ in $I_\pi$ must be $r(\pi)$; in fact, all vertices $w^{-1}w_0w$ conjugate to $w_0$ in $W$ and lying in $I_\pi$ must have this degree \cite[2.5]{Br99}.  We conjecture that this last condition is also sufficient; this has been checked for $n\le 9$.  

\section{The bad patterns}
We recast the necessary pattern avoidance condition above for rational smoothness in terms of pattern avoidance.

\newtheorem{theorem}{Theorem}
\begin{theorem}
With notation as above, the orbit $\mathcal O_\pi$ has rationally singular closure whenever $\pi$ contains one of the twenty-four bad patterns
$14325,21543,32154,\linebreak 154326,124356,351624,132546,426153,
153624,351426,1243576,2135467,\linebreak 2137654, 4321576,5276143,5472163,1657324,
4651327,57681324,65872143,\linebreak 13247856, 34125768,34127856,64827153$.  The same holds if $\pi$ contains the pattern $2143$, provided that there are an even number of fixed indices of $\pi$ between $21$ and $43$ (e.g. $\pi=21354687$, where the $43$ occurs in the last two indices of $\pi$).
\end{theorem}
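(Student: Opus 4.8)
The plan is to reduce the statement to finitely many base cases by a heredity principle for rational singularity, and then to settle each base case with the degree criterion of \cite[2.5]{Br99} recalled in Section 2.

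The first ingredient is the principle that rational singularity of an orbit closure is inherited under pattern containment: if $\pi\in I_m$ contains the involution pattern $\mu\in I_r$ at indices $i_1<\cdots<i_r$ permuted by $\pi$, then $\bar{\mathcal O}_\mu$ rationally singular should force $\bar{\mathcal O}_\pi$ rationally singular. I would prove this through the graph, as a persistence statement for the degree function, namely $\deg(w_0,I_\pi)-r(\pi)\ge\deg(w_0,I_\mu)-r(\mu)$. Beginning with a set of vertices adjacent to $w_0$ in $I_\mu$ whose cardinality exceeds $r(\mu)$ by the relevant amount, one transports each of them to a vertex adjacent to $w_0$ in $I_\pi$: a neighbor $t\,w_0^{(r)}t$ produced by conjugation is sent to $(i_a\,i_b)\,w_0^{(m)}(i_a\,i_b)$ when $t=(a\,b)$, and one checks---using the rank formula and the description of the reverse Bruhat order on $I_m$ in Section 2---that the image still lies below $\pi$ and differs from $w_0^{(m)}$; a dimension count then gives the inequality. (Alternatively one can quote the analogous statement from \cite{M10}, whose pattern-avoidance conventions are the ones used here.)

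The second ingredient is a direct verification that each of the twenty-four patterns $\mu$ on the list, regarded as an involution in the appropriate $S_r$ with $5\le r\le 9$, satisfies $\deg(w_0,I_\mu)>r(\mu)$, so that $\bar{\mathcal O}_\mu$ is rationally singular by Brion's criterion. Here $r(\mu)$ comes straight from the rank formula; for the degree one uses that $w_0$ is the minimum of $I_\mu$, so that a vertex $\nu$ is a neighbor of $w_0$ in $I_\mu$ exactly when $\nu\le\mu$ in the reverse Bruhat order and either $\nu=t\,w_0\,t\ne w_0$ for a transposition $t$, or ($r$ is even and) $\nu=t\,w_0$ with $t\,w_0\,t=w_0$. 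The candidates $\nu$ are finite in number---$\binom r2$ of the first kind and $\lfloor r/2\rfloor$ of the second---and membership of each candidate in $I_\mu$ is a finite combinatorial check against the rank formula. In every case the count of neighbors exceeds $r(\mu)$, and combined with the heredity principle this proves the first assertion of the theorem.

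For the pattern $2143$ the relevant base family is $\mu_k=(1\,2)(2k+3,\,2k+4)\in S_{2k+4}$ for $k\ge 0$---a copy of $21$, then $2k$ fixed indices, then a copy of $43$; if $\pi$ contains $2143$ with $2k$ intervening fixed indices, those $2k+4$ indices are permuted by $\pi$ and induce precisely $\mu_k$. I would show $\bar{\mathcal O}_{\mu_k}$ is rationally singular for every $k$ by the same degree computation, the point being that $2k+4$ is even, so the extra edges $\nu=t\,w_0$ are present and raise $\deg(w_0,I_{\mu_k})$ above $r(\mu_k)$ by a fixed positive amount independent of $k$; heredity then carries this over to $\pi$. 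When instead an odd number of fixed indices lies between, the induced pattern has odd size, the extra edges vanish, and the count no longer overshoots---which is why the theorem makes no claim there. The main obstacle is precisely the heredity principle in the sharp form needed here: since the edges $\nu=t\,w_0$ exist only when the ambient symmetric group has even size, one must track carefully whether they survive pattern embedding, and it is this parity bookkeeping that isolates the ``even number of intervening fixed indices'' hypothesis; the remaining work---the rank-formula computations, the order comparisons, and the twenty-four finite checks---is routine but lengthy.
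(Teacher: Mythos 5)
The key step in your plan is the heredity principle, phrased as the inequality $\deg(w_0,I_\pi)-r(\pi)\ge\deg(w_0,I_\mu)-r(\mu)$ under pattern containment. That inequality is \emph{false}, and the paper's own proof implicitly records a counterexample: for $\mu=213654$ the degree of $w_0$ in $I_\mu$ exceeds $r(\mu)$, but after inserting a single fixed point to obtain $\pi=2134765$ (which contains $\mu$) the degree of $w_0$ in $I_\pi$ drops back to $r(\pi)$. The same collapse happens for $3214576$, $2137564$, and $4231576$. The reason the transport you describe fails is that not every neighbor of $w_0$ in $I_\mu$ of the second type (i.e.\ $\nu=t\,w_0$ with $t\,w_0\,t=w_0$, available only in even rank) remains below $\pi$ once fixed indices are interleaved: the Bruhat comparison against the enlarged $\pi$ can change sign, and there is no uniform way to replace the lost neighbor.

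Your ``second ingredient'' also asserts something untrue: it is not the case that $\deg(w_0,I_\mu)>r(\mu)$ for all twenty-four listed patterns. The paper explicitly flags $2137654$ and $4321576$ as exceptions where the degree of $w_0$ equals $r(\mu)$; rational singularity of these two is instead witnessed at a different vertex conjugate to $w_0$, using the full strength of \cite[2.5]{Br99} (which imposes the degree condition at every $W$-conjugate of $w_0$ inside $I_\pi$, not just the bottom vertex). Your proposal tracks only $w_0$ and so cannot see these cases. The actual proof therefore does not reduce everything to the $24$ base patterns plus a single monotonicity lemma; it treats pattern containment as a sequence of elementary moves (add a fixed point, add a flipped pair), re-examines the $w_0$-degree at each stage, drops down to a conjugate of $w_0$ in the finitely many places where that degree is not large enough, and for the flipped-pair step invokes an argument parallel to the Lemma of \cite{M10} rather than any blanket heredity. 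The $2143$ discussion has the same shape: when exactly one non-intervening fixed point is added, the $w_0$-degree is again only equal to $r(\pi)$ and a specific conjugate of $w_0$ must be used. Without a substitute for the heredity claim that accounts for conjugates of $w_0$ and the failure of monotonicity, the proposed argument does not go through.
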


\begin{proof}
One checks first that the degree of $w_0$ in $I_\pi$ is greater than $r(\pi)$ for any $\pi$ in the list above, {\sl except} for 2137654 and 4321576.  The closures of the orbits corresponding to these two permutations are also rationally singular, as follows by computing that the degree of a suitable conjugate of $w_0$ (indexed by 7643521 in the first case and 7635421 in the second) is greater than $r(\pi)$.  If $\pi$ is obtained from one of the bad patterns above other than 2143 by adding one or more fixed points, then one computes that the degree of $w_0$ is again more than $r(\pi)$, except for 2134765, 3214576, 2137564, and 4231576, and in these cases again the degree at a suitable conjugate of $w_0$ is too large.  Moreover, any involution $\pi'$ obtained from one of the above ones by adding one fixed point has the degree of $w_0$ bigger than $r(\pi')$.   The same holds if $\pi$ is obtained from 2143 by adding two or more fixed points, with an even number of them lying between 21 and 43.  If $\pi$ is obtained from 2143 by adding just one fixed point not lying between 21 and 43, then the unique vertex conjugate to $w_0$ having this fixed point has degree larger than $r(\pi)$.  Finally, if $\pi$ is obtained from one of the bad patterns by adding fixed points as above and then pairs of flipped indices, then one argues as in the proof of the Lemma in \cite{M10} that some vertex in $I_\pi$ has degree greater than $r(\pi)$.  Hence in all cases $\bar{\mathcal O}_{\pi}$ is rationally singular.
\end{proof}

We specialize to the even case $n=2m$ in our next result.

\newtheorem*{theorem2}{Theorem 2}
\begin{theorem2}
Assume that $n=2m$ is even.  The orbit $\mathcal O_\pi$ has rationally smooth closure if and only if the degree of $w_0$ in $I_\pi$ is $r(\pi)$.
\end{theorem2}

\begin{proof}
Set $\mathcal O=\mathcal O_\pi$.  We have already noted that the degree condition is necessary, so suppose that it is satisfied.  We begin by constructing a slice of $\bar{\mathcal O}$ to $\mathcal O_c$ at a particular flag, as follows.  Fix a basis $(e_i)$ of $\mathbb C^{2m}$ such that $(e_i,e_j)=1$ if $i+j=2m+1$ and $(e_i,e_j)=0$ otherwise, where as above $(\cdot,\cdot)$ is the symmetric form.  Let $(a_{ij})$ be a family of complex parameters indexed by ordered pairs $(i,j)$ satisfying either $i\le m<j$ or $m<i<j$.  We assume that $a_{ij} = a_{2m+1-j,2m+1-i}$ if $i\le m<j$  but otherwise put no restrictions on the $a_{ij}$.  Define a basis $(b_i)$ of $\mathbb C^{2m}$ via 

\[ b_i = \begin{cases} e_i + \displaystyle\sum_{j=m+1}^{2m} a_{ij} e_j & \text{if $i\le m$}\\
e_i + \displaystyle\sum_{j=i+1}^{2m} a_{ij} e_j & \text{otherwise}
\end{cases}
\]

\noindent Then the Gram matrix $G := (g_{ij} = (b_i,b_j))$ of the $b_i$ relative to the form satisfies
\[
 g_{ij} = \begin{cases} 2a_{i,2m+1-j} & \text{if $i\le j\le m$}\\
g_{ji} & \text{if $j<i\le m$}\\
a_{j,2m+1-i} & \text{if $i<m<j<2m+1-i$}\\
1 & \text{if $i\le m<j= 2m+1-i$}\\
g_{ji} & \text{if $j\le m<i$}\\
0 & \text{otherwise}
\end{cases}
\]
\noindent Thus the matrix $G$ is symmetric and has zeroes below the antidiagonal from lower left to upper right.  The antidiagonal entries are all 1.  Now one checks that the set $\mathcal S'$ of all flags $V_0\subset\ldots\subset V_{2m}$ where $(b_i)$ runs through all bases obtained as above from the $a_{ij}$ and $V_i$ is the span of $b_1,\ldots b_i$ is a slice of $G/B$ to $\mathcal O_c$ at the flag $f_c$ corresponding to the basis $(e_i)$, which in turn corresponds to the point $P$ where all $a_{ij}=0$  \cite[2.1]{Br99}.  Intersecting $\mathcal S'$ with $\bar{\mathcal O}$ we get a slice $\mathcal S$ of $\bar{\mathcal O}$ to $\mathcal O_c$ at $P$ in the sense of Brion \cite[2.1]{Br99}, defined by the vanishing of certain minors in the Gram matrix $G$.   It is known (\cite{Br99}) that $\mathcal S$ is rationally smooth (resp. smooth) at $P$ if and only if it is rationally smooth (resp. smooth) everywhere, or if and only if $\bar{\mathcal O}$ is rationally smooth (resp. smooth) everywhere.   (This construction works with minor modifications for odd $n$ as well).

We now show that $\mathcal S$ is rationally smooth at $P$ by verifying the conditions of \cite[1.4]{Br99}.  Actually we construct a rationally smooth slice $\mathcal S''$ for a variety $\mathcal V$ containing $\bar{\mathcal O}_\pi$, with no hypothesis on the degree of $w_0$ in $I_\pi$; if this hypothesis holds, then $\mathcal V$ coincides with $\bar{\mathcal O}_\pi$ and we may take $\mathcal S'' =\mathcal S$.  For each conjugate $v=t\cdot w_0$ of $w_0$ by a transposition $t$ with $v\not\le\pi$, write $v$ as $v_1\ldots v_{2m}$ in one-line notation.  Let $i$ be the smallest index such that if $\pi_1\ldots \pi_i$ is rearranged in ascending order as $\pi_1'\ldots\pi_i'$ and similarly $v_1\ldots v_i$ is rearranged as $v_1'\ldots v_i'$, then $\pi_j'>v_j'$ for some $j\le i$.  Then there is some $k$ such that there are more indices $\ell\le i$ (say $n_k$ of them) with $v_\ell\le k$ than indices $m\le i$ with $\pi_m\le k$.  Let $j$ be the smallest index with $v_j\ne 2n+1-j$.  Set $r:=2n+1-k+n_k-2$.  If there are fewer than $n_k$ indices less than or equal to $k$ among $\pi_1\ldots\pi_r$, then the minor of $G$ consisting of those entries in rows $j,r-(n_k-2),\ldots,r-1,r$ and columns $\min(v_j,k-(n_k-1)),k-(n_k-2),\ldots, k-1,k$ vanishes on $\bar{\mathcal O}_\pi$; one variable in this minor occurs to the first power and is not multiplied by any other variable.  Otherwise the minor consisting of those entries in rows $j,k-(n_k -2),\ldots,k$ and columns $j,k-(n_k-2),\ldots,k$ (or just row and column $k$, if $n_k=1$) vanishes on $\bar{\mathcal O}_\pi$ and may involve certain variables quadratically.  Define the slice $\mathcal S''$ by the simultaneous vanishing of these minors and let $\mathcal V$ be the corresponding subvariety of $G/B$, which contains $\bar{\mathcal O}_\pi$.

Define an action of the $m$-torus $T=\mathbb T^m$ on the matrix $G$ by multiplying the first $m$ rows and columns by $t_1,\ldots,t_m$, respectively, while multiplying the last $m$ rows and columns by $t_m^{-1},\ldots,t_1^{-1}$, respectively; this action preserves the 1s on the antidiagonal and the vanishing of the minors that define the slice $\mathcal S''$. ($T$ is just a maximal torus of $K$.) Then the weights of $T$ occurring in the tangent space at $P$ of the big slice $\mathcal S'$ are those of the form $2e_i,e_i+e_j$, or $e_i-e_j$ for some $1\le i<j\le m$ and all occur with multiplicity one.  They all lie on one side of a hyperplane and $P$ is an attractive fixed point of both $\mathcal S''$ and $\mathcal S'$.   The subtori $ T'$ of $T$ of codimension one such that the fixed point subvariety $(\mathcal S'')^{T'}$ of the slice $\mathcal S''$ under the $T'$-action contains more than one point correspond exactly to the conjugates $t\cdot w_0$ of $w_0$ not lying below $\pi$.  Hence conditions \cite[1.4(ii),(iii)]{Br99} are satisfied for $\mathcal S''$ and $\mathcal V$.   As for \cite[1.4(i)]{Br99}, we find that by repeatedly slicing the slice $\mathcal S''$ at nonzero values of variables occurring in it, we are led to a product of varieties defined by quadratic equations in its variables, which is easily seen to be rationally smooth away from the origin.  (Here is where the evenness of $n$ is crucial; the varieties in question are not rationally smooth away from the origin if $n$ is odd.)  Thus $\mathcal S''$ and $\mathcal V$ are both rationally smooth.  If the degree condition holds on $w_0$, then by counting dimensions we see that $\mathcal V$ coincides with $\bar{\mathcal O}_\pi$, so it too is rationally smooth.

\end{proof}

\section{Main result}

\newtheorem*{theorem3}{Theorem 3}
\begin{theorem3}
If  $\pi$ avoids the bad patterns of Theorem 1, then all conjugates of $w_0$ in $I_\pi$ have degree $r(\pi)$, so that $\bar{\mathcal O}_\pi$ has rationally smooth closure if $n$ is even.
\end{theorem3}

\begin{proof}
Suppose first that $n$ is even.  We first show that $w_0$ has degree $r(\pi)$.  We have seen that the neighbors $\nu$ adjacent to any $\mu\in I_n$ (adjacent to $\mu$) take the form either $\nu=t\mu t$ or $\nu=t\mu$, for some transposition $t$; accordingly we say that $v$ is of {\sl type 1} (resp.\ {\sl type 2}) if it takes the first (resp.\ the second) form.  Now recall that the poset $I_n'$ of fixed-point-free involutions in $S_n$ with the reverse Bruhat order parametrizes the poset of Sp$_{2m}$-orbits in $G/B$, ordered by inclusion of closures; moreover the rank functions for $I_n$ and $I_n'$ coincide.  In \cite{M10} we characterized the fixed-point-free involutions $\pi$ such that the degree of $w_0$ in the order ideal $I_\pi'$ of $I_n'$ equals $r(\pi)$.  We first claim that there is a unique minimal fixed-point-free involution $f(\pi)$ lying above $\pi$ in the usual Bruhat order (so below it in the reverse one).  Indeed, if $\pi$ is fixed-point-free, then $f(\pi) = \pi$; otherwise let the indices fixed by $\pi$ be $i_1,\ldots,i_{2k}$ in increasing order, so that $\pi$ does not fix any index between $i_j$ and $i_{j+1}$ for any $j\le 2k-1$.  Now for every $j$ look at the pairs of indices both lying between $i_{2j-1}$ and $i_{2j}$ and flipped by $\pi$.  We say that such a pair $(i,\ell)$  with $i<\ell$ encapsulates another one $(j,k)$ if $i<j<k<\ell$.  Let $(\ell_{j1},\ell_{j1}'),\ldots,(\ell_{jm},\ell_{jm}')$ enumerate all such pairs not encapsulating other ones, labelled so that $\ell_{ji}<\ell_{ji}'$ for all $i$ and
$\ell_{ji}'<\ell_{j(i+1)}'$ for $i\le m-1$.  Replace $i_{2j-1},\ell_{j1}',\ldots,\ell_{jm}'$ in the one-line notation of $\pi$ by $\ell_{j1'},\ldots,\ell_{jm}',i_{2j}$, respectively, replace $\ell_{j1},\ldots,\ell_{jm},i_{2j}$ by $i_{2j-1},\ell_{j1},\ldots,\ell_{jm}$, respectively, and leave all other indices unchanged.  This gives the one-line notation of $f(\pi)$.  Thus for example if $\pi =16754238$, then $f(\pi) = 56781234$.  Now the type 1 neighbors(=adjacent vertices) of $w_0$ lying in $I_\pi$ are the same as those lying in $I_{f(\pi)}$.  We can read off the number $t(\pi)$ of type 2 neighbors of $w_0$ lying in $I_\pi$ from the one-line notation of $\pi$, as follows.  Let $i_1$ be the smallest index such that $\pi(i_1)\le i_i$ and let $i_2$ be the smallest index such that $\pi(n+1-i_2)\ge 2n+1-i_2$.  Let $i$ be the maximum of $i_1$ and $i_2$; then $t(\pi) = m+1-i$.  Now let $\pi$ be an involution of even length $2m$ that either appears in the above list or is obtained from an involution in this list by adding one fixed point.  One checks in all cases that  either the rank difference $r(\pi) - r(f(\pi))$ is less than $t(\pi)$ or $f(\pi)$ contains a bad pattern for $I_{2m}'$ (i.e., as a fixed-point-free involution, so that the degree of $w_0$ in $I_{f(\pi)}'$ is already too large).  Table 1 below lists the possibilities for $\pi$, making a representative choice among these possibilities if $\pi$ is obtained from a bad pattern of odd length by adding one fixed point.  We give first $\pi$, then its rank $r(\pi)$, then $f(\pi)$, then its rank $r(f(\pi))$ the number $t(\pi)$ of type 2 neighbors of $\pi$, and finally the difference between $r(f(\pi))$ and the degree of $w_0$ in $I_{2m}'$ whenever this difference is nonzero.
\vfil\eject
\vskip .5in
\begin{tabular}{| c | c | c | c | c | c |}\hline
$\pi$ & $r(\pi)$ & $f(\pi)$ & $r(f(\pi))$ & $t(\pi)$ & difference\\
\hline
2143 & 2& 2143 & 2 & 1 & \\ \hline
143256 & 7 & 341265 & 5 & 3 & \\ \hline
215436 & 6 & 215634 & 5 & 2 & \\ \hline
321546 & 6 & 351624 & 4 & 2 & 1\\ \hline
154326 & 5 & 456123 & 3 & 3 & \\ \hline
124356 & 8 & 214365 & 6 & 3 & \\ \hline
351624 & 4 & 351624 & 4 & 1 & 1\\ \hline
132546 & 7 & 351624 & 4 & 3 & 1\\ \hline
426153 & 4 & 456123 & 3 & 2 & \\ \hline
153624 & 5 & 351624 & 4 & 1 & 1\\ \hline
351426 & 5 & 351624 & 4 & 1 & 1\\ \hline
12435768 & 14 & 21437856 & 11 & 4 & \\ \hline
21354678 & 14 & 21563478 & 11 & 3 & 1\\ \hline
21376548 & 11 & 21678345 & 9 & 3 & \\ \hline
43215768 & 11 & 43218765 & 8 & 2 & 2\\ \hline
52761438 & 8 & 56781234 & 6 & 3 & \\ \hline
54721638 & 8 & 54721836 & 7 & 1 & 1 \\ \hline
16573248 & 9 & 56781234 & 6 & 4 & \\ \hline
46513278 & 9 & 46513287 & 8 & 1 & 1\\ \hline
57681324 & 5 & 57681324 & 5 & 0 & 1\\ \hline
65872143 & 4 & 65872134 & 4 & 0 & 1\\ \hline
34127856 & 10 & 34127856 & 10 & 2 & 1\\ \hline
64827153 & 5 & 64827153 & 5 & 1 & 1\\ \hline
13247856 & 12 & 34127856 & 10 & 2 & 1\\ \hline
34125768 & 12 & 34127856 & 10 & 2 & 1\\ \hline
\end{tabular}
\vskip .5in
\noindent Given an arbitrary involution $\pi$ for which the degree $d_{w_0}$ of $w_0$ in $I_\pi$ is too large, either the degree of $w_0$ in $I_{f(\pi)}'$ must already be too large (forcing $f(\pi)$ to contain one of the seventeen bad patterns of \cite{M10}) or $t(\pi)$ must be larger than the rank difference $r(\pi) - r(f(\pi))$ (or both).  The value of $t(\pi)$ is determined by the indices $i_1,i_2$ attached to $\pi$ above.  Bearing in mind the recipe for computing $f(\pi)$ from $\pi$ and the list of bad patterns in \cite{M10} (none of which has length larger than eight) we see that we can replace any $\pi$ for which $d_{w_0}$ is too large by an involution it includes of length at most eight with the same property (including the indices $i_1$ and $i_2$, two fixed points of $\pi$ with a pair of flipped indices lying between them, and that pair of indices).  But the above patterns capture all instances where this happens for $m=4$ (as one sees by examining all the possibilities, using for example the computation of the Kazhdan-Lusztig-Vogan polynomials defined in \cite{V83} for $GL_8\mathbb R$ provided by the ATLAS software, available at www.liegroups.org).  Thus any such $\pi$ includes a pattern in the above list, as desired.

Now we show that all conjugates of $w_0$ in $I_\pi$ also have degree $r(\pi)$.   By \cite[Theorem 2]{M10} the number of type 1 neighbors of any conjugate of $w_0$ lying below $f(\pi)$ in $I_\pi$ is no greater than $r(f(\pi))$, provided this holds for $w_0$.  One checks from the above formula for the number of type 2 neighbors of $w_0$ {\sl not} lying below $\pi$ that this number can only increase if $w_0$ is replaced by a conjugate of itself (i.e., by another fixed-point-free involution), so the degree of a conjugate of $w_0$ is bounded above by $r(\pi)$ whenever the degree of $w_0$ is, as desired.

If instead $n=2m+1$ is odd, then a similar but more complicated argument works.  Recall first that the neighbors of $\mu\in I_\pi$ all take the form $\nu=t\mu t$ for some transposition $t$ not commuting with $\mu$ (transpositions $t$ commuting with $\mu$ no longer give rise to adjacent vertices in this case).  We say that $\nu$ is of type 1 if the transposition $t$ does not involve the middle index $m+1$ and of type 2 if it does involve this index. Now given $\pi\in I_n$ there is a unique smallest $f(\pi)$ lying above $\pi$ in the Bruhat order among involutions fixing the index $m+1$ but no other.  To construct $f(\pi)$, assume first that $\pi$ already fixes $m+1$; then we just apply the above recipe for $f$ to $\pi$ restricted to the other indices $1\ldots,m,m+2,\ldots,2m+1$, decreeing at the end that $f(\pi)$ also fix $m+1$.  Now assume that $\pi(m+1)<m+1$; if this is not the case, conjugate $\pi$ by $w_0$ to make this hold, apply the following recipe, and then conjugate by $w_0$ again.  Denote by $i_1,i_2$ the largest fixed point of $\pi$ less than $m+1$ (if there is one) and the smallest fixed point of $\pi$ larger than $m+1$ (if there is one).  Enumerate the pairs of indices flipped by $\pi$ not encapsulating other pairs for which the larger index is greater than or equal to $m+1$ and the less than $i_2$ (if it exists) as $(\ell_1,\ell_1'),\ldots,(\ell_m,\ell_m')$ with $\ell_i<\ell_i'$ and the $\ell_i'.$ in increasing order, as in the previous recipe.  If $i_2$ exists, let $T$ consist of the set of $\ell_i'$ together with $i_2$; if $i_2$ does not exist, let $T$ consist of the $\ell_i'$ together with $\ell_m$.  Likewise let $S$ consist of the $\ell_i$ together with $i_1$ if it exists.  Define a new involution $\pi'$ by declaring that it fix the smallest index $m+1$ in $T$, flip the next smallest index of $T$ with the smallest one in $S$, and so on, finally flipping the largest index in $S$ with that in $T$ (if $i_2$ does not exist), or fixing the largest index in $S$ (if $i_1$ and $i_2$ both exist).  Other indices have the same image under $\pi'$ as under $\pi$.  Then $\pi'$ fixes $m+1$ and lies above $\pi$.  Applying the above recipe to $\pi'$, we get $f(\pi') = f(\pi)$.  Thus for example if $\pi = 13245$, then $S$ consists of the indices 1 and 2, while $T$ consists of 3 and 4; here $i_1 = 1,i_2 = 4$.  Then $\pi'$ fixes 3, flips 1 and 4, and fixes 2, whence $\pi' = 42315$; finally $f(\pi') = f(\pi) = 45312$.

We also have an analogous formula to the one above for the number $t(\pi)$  of type 2 neighbors of $w_0$ in $I_\pi$.  Let $i_1$ be the smallest index such that $\pi(i_1)\le i_1$ and $\pi(j)\ge m+1$ for some $j\ge n+1-i_1)$; similarly let $i_2$ be the smallest index such that $\pi(n+1-i_2)\ge n+1-i_2$ and $\pi(j)\le m+1$ for some $j\le i_2$.  Then the number $t(\pi) = n+1-i_1-i_2$.  A similar argument to the one above shows that avoiding the above list of bad patterns is sufficient to guarantee that the degree of $w_0$ in $I_\pi$ is $r(\pi)$; similarly any conjugate of $w_0$ in $I_\pi$ fixing $m+1$ has degree $r(\pi)$.

We now consider vertices in $I_\pi$ fixing a single index other than $m+1$.  For $1\le i\le n$, denote by $w_0^{(i)}$ the unique involution whose one-line notation has $i$ in the $i$th position and the other indices listed in decreasing order.  Then either $w_0^{(i)}$ lies in $I_\pi$, or else no vertex in $I_\pi$ fixes $i$ and no other index.  In the former case there is a unique smallest vertex $f^{(i)}(\pi)$ lying above $\pi$ in the usual Bruhat order, constructed as above by replacing the index $m+1$ throughout by $i$.  We construct it as above, replacing the index $m+1$ throughout by $i$, except that if $i$ is not the smallest index in the set $T$, then $w_0^{(i)}$ does not lie above $\pi$, so that no involution fixing $i$ alone lies above $\pi$ and $f^{(i)}(\pi)$ is undefined.  We argue as above that avoiding the bad patterns of Theorem 1 implies that $w_0^{(i)}$ has degree $r(\pi)$ whenever it lies in $I_\pi$ and that all conjugates of it fixing only the index $i$ have this degree as well.
\end{proof}

The condition that $w_0$ alone have degree $r(\pi)$ is {\sl not} sufficient for rational smoothness if $n$ is odd, as the examples $\pi = 2137654$ (cited above) and $\pi=21435$ show.  However we have 

\newtheorem*{conjecture4}{Conjecture 4}
\begin{conjecture4}
For $n$ odd, the orbit closure $\bar{\mathcal O}_\pi$ is rationally smooth if and only if all conjugates of $w_0$ in the order ideal $I_\pi$ have degree $r(\pi)$, or if and only if $\pi$ avoids all bad patterns.
\end{conjecture4}

We also have

\newtheorem*{conjecture5}{Conjecture 5}
\begin{conjecture5}
Avoiding all of the above patterns in the sense of this paper is equivalent to avoiding the same patterns in the classical sense
\end{conjecture5}

For example, we observed above that the involution 65872143 avoids the bad pattern 2143 in the sense of this paper, but it is itself another bad pattern, so does not correspond to an orbit with rationally smooth closure.  (We should mention that for the purposes of this conjecture, any involution with four indices permuted according to the pattern 2143 is counted as including that pattern, regardless of the number of fixed points between the 21 and the 43).  Probably this conjecture can be checked by a computer without too much trouble. 

 Finally, we note that in our setting, unlike that of \cite{H12} and \cite{M10} smoothness and rational smoothness of orbit closures are not equivalent.  The orbit closure corresponding to the involution 1324 is rationally smooth but not smooth.  We hope to study smoothness of orbit closures in a future paper.

\end{document}